\newcommand{\RR}{\mathbb{R}}
\newcommand{\newword}[1]{\textbf{\emph{#1}}}
\newcommand{\C}{\mathcal{C}}
\newcommand{\W}{\mathcal{W}}
\newcommand{\M}{\mathcal{M}}
\newcommand{\I}{\mathcal{I}}
\newcommand{\B}{\mathcal{B}}
\def\M{\mathcal{M}}
\newtheorem{conj}{Conjecture}[section]
\newtheorem{Theorem}[conj]{Theorem}
\newtheorem{theorem}[conj]{Theorem}
\newtheorem{proposition}[conj]{Proposition}
\newtheorem{lemma}[conj]{Lemma}
\newtheorem{corollary}[conj]{Corollary}
\theoremstyle{definition}
\begin{document}

\title{Links in the complex of weakly separated collections}
\author{SuHo Oh and David E Speyer}
\date{}
\maketitle

\begin{abstract}

Plabic graphs are interesting combinatorial objects used to study the totally nonnegative Grassmannian. 
Faces of plabic graphs are labeled by $k$-element sets of positive integers, and a collection of such $k$-element sets are the face labels of a plabic graph if that collection forms a maximal weakly separated collection. 
There are moves that one can apply to plabic graphs, and thus to maximal weakly separated collections, analogous to mutations of seeds in cluster algebras. In this short note, we show that if two maximal weakly separated collections can be mutated from one to another, then one can do so while freezing the face labels they have in common.
\end{abstract}


%
%

\section{Introduction}
\label{sec:WS}

Fix two positive integers $k \leq n$.
Let $[n]:= \{ 1,\ldots, n \}$.
We will generally consider $[n]$ as cyclically ordered.  We will say that
$i_1$, $i_2$, \ldots, $i_r$ in $[n]$ are {\it cyclically ordered\/} 
if $i_s < i_{s+1} < \cdots < i_r < i_1 < i_2 < \cdots < i_{s-1}$ for some $s\in[r]$. 


Fix positive integers $k<n$. Let $I$ and $J$ be two $k$-element subsets of $\{ 1,2, \ldots, n \}$.
The following definition is due to Leclerc and Zelevinsky~\cite{LZ}, see also~\cite{Scott} and~\cite{OPS}:
The sets $I$ and $J$ are called \newword{weakly separated} if there do \textbf{not} exist $a$, $b$, $c$ and $d$ cyclically ordered with $a$, $c \in I \setminus J$ and $b$, $d \in J \setminus I$. 
Graphically, $I$ and $J$ are weakly separated if we can draw a chord across the circle separating $I \setminus J$ from $J \setminus I$.
We write $I \parallel J$ to indicate that $I$ and $J$ are weakly separated.

Write $\binom{[n]}{k}$ for the set of $k$ element subsets of $[n]$. We will use the term \newword{collection} to refer to a subset of $\binom{[n]}{k}$.
We define a \newword{weakly separated collection} to be a collection 
$\C\subset \binom{[n]}{k}$ such that, for any $I$ and $J$ in $\C$, 
the sets $I$ and $J$ are weakly separated. 
We define a \newword{maximal weakly separated collection} to be a weakly separated collection which is not contained 
in any other weakly separated collection.

Following Leclerc and Zelevinsky \cite{LZ}, Scott observed the following:
\begin{proposition}
\cite{Scott}, cf.\ \cite{LZ}
\label{mutation}
Let $S \in \binom{[n]}{k-2}$ and let $a$, $b$, $c$, $d$ be cyclically ordered elements of $[n] \setminus S$. 
Suppose that a maximal weakly separated collection $\C_1$ contains $S \cup \{ a, b \}$, $S \cup \{b, c \}$, $S \cup \{c, d \}$, $S \cup \{ d, a \}$ and $S \cup \{ a, c \}$.
Then $\C_2 := (\C_1 \setminus \{ S \cup \{ a, c \} \}) \cup \{ S \cup \{ b, d \} \}$ is also a maximal weakly separated collection.
\end{proposition}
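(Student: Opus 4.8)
The plan is to deduce the proposition from a single local statement about one $k$-set, and then to prove that statement using the graphical chord description given right after the definition of weak separation. Throughout write $S_{xy}:=S\cup\{x,y\}$ for $x,y\in\{a,b,c,d\}$. The local statement I will aim for is:

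\emph{Local Lemma.} If $K\in\binom{[n]}{k}$ satisfies $K\parallel S_{ab}$, $K\parallel S_{bc}$, $K\parallel S_{cd}$ and $K\parallel S_{da}$, then $K\parallel S_{ac}$.

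Since $a,b,c,d$ are cyclically ordered if and only if $b,c,d,a$ are, I may apply the Local Lemma verbatim to the relabelling $(a,b,c,d)\mapsto(b,c,d,a)$: the four hypotheses are literally the same four sets, while the conclusion becomes $K\parallel S_{bd}$. Granting the lemma, the proposition follows in two steps. First, $\C_2$ is weakly separated: the only pairs to check beyond those inherited from $\C_1$ are $\{K,S_{bd}\}$ with $K\in\C_1\setminus\{S_{ac}\}$, and any such $K$ is weakly separated from $S_{ab},S_{bc},S_{cd},S_{da}\in\C_1$, so the relabelled lemma gives $K\parallel S_{bd}$. Second, $\C_2$ is maximal: if it were not, some $L\notin\C_2$ would be weakly separated from all of $\C_2$. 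The case $L=S_{ac}$ is impossible, since it would force $S_{ac}\parallel S_{bd}$, whereas $a,c\in S_{ac}\setminus S_{bd}$ and $b,d\in S_{bd}\setminus S_{ac}$ are cyclically ordered as $a,b,c,d$. If instead $L\neq S_{ac}$ then $L\notin\C_1$, while $L$ is weakly separated from the four sides (which lie in $\C_1\setminus\{S_{ac}\}\subseteq\C_2$); the Local Lemma then yields $L\parallel S_{ac}$, so $L$ would be weakly separated from every element of $\C_1$, contradicting maximality of $\C_1$.

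It remains to prove the Local Lemma, and this is where the real work lies. Because weak separation of two sets depends only on their symmetric difference, any element of $S\cap K$ lies in both $K$ and each of $S_{ab},\dots,S_{da},S_{ac}$, so deleting it simultaneously from $S$ and from $K$ changes none of the relations above; I may therefore assume $S\cap K=\emptyset$. Put $M:=K\setminus\{a,b,c,d\}$, so $K=M\cup(\{a,b,c,d\}\cap K)$ with $M$ disjoint from $S$. Using the chord description, $X\parallel Y$ holds exactly when $X\setminus Y$ and $Y\setminus X$ do not interleave around the circle; applied to any one hypothesis this already shows that $M$ and $S$ do not interleave, so a chord separates all of $M$ from all of $S$. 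Suppose for contradiction that $K\not\parallel S_{ac}$, and fix cyclically ordered witnesses $w,x,y,z$ with $w,y\in K\setminus S_{ac}$ and $x,z\in S_{ac}\setminus K=S\cup(\{a,c\}\setminus K)$; note the $X$-side witnesses $w,y$ can only be points of $M$ or the corners $b,d$, and the $Y$-side witnesses $x,z$ can only be points of $S$ or the corners $a,c$.

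The strategy is to transport this interleaving to one of the four sides, exploiting that for every side $S_{xy}$ the whole of $M$ lies in $K\setminus S_{xy}$ and the whole of $S$ lies in $S_{xy}\setminus K$, so the four sides differ from $S_{ac}$ and from one another only in how they place the corners $a,b,c,d$. When at most one corner occurs among $w,x,y,z$ the same quadruple already interleaves for a suitable side—if the lone corner is a point of $K$ one chooses a side omitting it, and if it is $a$ or $c$ one chooses a side containing it—so one hypothesis is violated. I expect the main obstacle to be the configuration in which the two witnesses on one side of the relation are the two \emph{opposite} corners, namely $\{w,y\}=\{b,d\}$ with $b,d\in K$, or $\{x,z\}=\{a,c\}$ with $a,c\notin K$: then no single side $S_{xy}$ carries both members of the pair on the correct side, so the quadruple cannot be reused unchanged. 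The plan for clearing this is to invoke the separating chord above together with the finer datum of which of the arcs $(a,b),(b,c),(c,d),(d,a)$ each remaining witness occupies; this should single out a side for which an interleaving quadruple can be rebuilt, the obstructing opposite corner being replaced by a point that side does carry on the correct side—an adjacent corner lying in $K$, or a point of $M$ in the same arc—so that the cyclic alternation survives. The case $S=\emptyset$, where $K$ is a single chord and the claim is the planar fact that a chord meeting a diagonal of the inscribed quadrilateral $abcd$ must also meet one of its sides, is immediate and serves as the model for the general bookkeeping.
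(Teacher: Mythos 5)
Your Local Lemma is false as stated, and the counterexample is a set that appears explicitly elsewhere in your own argument: $K=S\cup\{b,d\}$. This $K$ is weakly separated from each of the four sides, because in each case both set differences are singletons (for instance $(S\cup\{b,d\})\setminus(S\cup\{a,b\})=\{d\}$ and $(S\cup\{a,b\})\setminus(S\cup\{b,d\})=\{a\}$, and an interleaving requires two elements in each difference); yet $S\cup\{b,d\}$ is not weakly separated from $S\cup\{a,c\}$, exactly as you yourself observe when ruling out the case $L=S_{ac}$ in your maximality step. Consequently no proof of the Local Lemma can exist, and the breakdown occurs precisely at the place your sketch leaves open: when $\{w,y\}=\{b,d\}$ with $b,d\in K$, or $\{x,z\}=\{a,c\}$ with $a,c\notin K$, there is in general no way to rebuild an interleaving against one of the sides, since for $K=S\cup\{b,d\}$ all four sides genuinely are weakly separated from $K$. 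The conditional phrasing of that step (``I expect\ldots'', ``this should single out a side'') is therefore not a bookkeeping gap but the exact point where the statement itself fails.

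The surrounding reduction can be saved, because in both applications the offending set is excluded: in the weak-separation step you apply the relabelled lemma only to $K\in\C_1\setminus\{S\cup\{a,c\}\}$, and $S\cup\{a,c\}$ is the unique counterexample to that relabelled statement; in the maximality step you apply the lemma to $L\notin\C_2$, while the counterexample $S\cup\{b,d\}$ lies in $\C_2$. So the lemma you actually need, and should prove, is: if $K$ is weakly separated from the four sides and $K\not\parallel S\cup\{a,c\}$, then $K=S\cup\{b,d\}$. This is true, but its proof requires more than transporting your fixed witness quadruple: in the hard case one must exploit $K\neq S\cup\{b,d\}$ to produce either an element of $K\setminus(S\cup\{b,d\})$ or an element of $(S\cup\{b,d\})\setminus K$, and from it build a \emph{new} interleaving quadruple violating one of the four side hypotheses. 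Note for comparison that the paper does not prove this proposition at all (it is cited to Scott and Leclerc--Zelevinsky), and the nearby statement it does prove, Lemma~\ref{lem:cross}, runs in the opposite direction: weak separation from both diagonals implies weak separation from the four sides. So your route is genuinely different from anything in the paper, but as written it rests on a false statement.
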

When $\C_1$ and $\C_2$ are related as in this proposition, we will say that $\C_1$ and $\C_2$ are \newword{mutations} of each other. 
Relying on results of \cite{Post}, in~\cite{OPS} the authors proved that any two maximal weakly separated collections are linked by a sequence of mutations.
As a corollary, any two maximal weakly separated collections have the same cardinality -- namely $k(n-k)+1$.

In other words, if we form a simplicial complex whose vertices are indexed by $\binom{[n]}{k}$, and whose faces are the maximal weakly separated sets, then this complex is pure of dimension $k(n-k)$ and is connected in codimension $1$.
This complex was further studied in \cite{HH}.

In this paper, we will study the links of faces in this complex. Namely, our main result is:
\begin{theorem}
Let $\B \subset \binom{[n]}{k}$ be a weakly separated collection. Let $\C$ and $\C'$ be two maximal weakly separated collections containing $\B$. Then $\C$ and $\C'$ are linked by a chain of mutations $\C = \C_1 \to \C_2 \to \cdots \to \C_r = \C'$ where all the $\C_i$ contain $\B$.
\end{theorem}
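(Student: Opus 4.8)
The plan is to deduce the theorem from the unfrozen connectivity statement of \cite{OPS} by a \emph{padding} trick: enlarge the ambient Grassmannian so that every member of the common collection $\B$ becomes a \newword{frozen} set, i.e.\ a set which lies in the Grassmann necklace (boundary) of a positroid cell and hence belongs to \emph{every} maximal weakly separated collection of that cell. The point of this maneuver is that a frozen set is never in the middle position $S\cup\{a,c\}$ of a mutation square, so \emph{no} mutation can delete it; consequently any chain of mutations produced upstairs automatically preserves the frozen part, and the problem of ``freezing $\B$'' evaporates. Thus if we can realize $\B$ as a family of frozen sets after padding, pulling an arbitrary upstairs mutation chain back down will produce exactly a $\B$-preserving chain.

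The heart of the argument is therefore the construction of a padding map. Given the weakly separated collection $\B \subset \binom{[n]}{k}$, I would produce integers $K\geq k$, $N\geq n$ and an injection $\phi:\binom{[n]}{k}\to\binom{[N]}{K}$ together with a positroid cell $Z$ in $\mathrm{Gr}(K,N)$ enjoying three properties: (a) $I\parallel J$ if and only if $\phi(I)\parallel\phi(J)$, so that $\phi$ transports weak separation in both directions; (b) $\phi$ induces a bijection between the maximal weakly separated collections of $\mathrm{Gr}(k,n)$ containing $\B$ and the maximal weakly separated collections of the cell $Z$, and this bijection intertwines the mutation of Proposition~\ref{mutation}; and, crucially, (c) each $\phi(B)$ with $B\in\B$ is frozen in $Z$, i.e.\ lies in its Grassmann necklace, so $\phi(\B):=\{\phi(B):B\in\B\}$ sits inside every maximal weakly separated collection of $Z$. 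I expect $\phi$ to be built one member of $\B$ at a time, each set being frozen by inserting a small gadget of new ground-set elements that forces its image to sit like a boundary face, with the weak separation of $\B$ guaranteeing that these gadgets can be installed compatibly; the $\LZprec$-order on the members of $\B$ is the natural device for choosing the order of installation.

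Granting such a $\phi$, the theorem follows at once. Given $\C,\C'\supseteq\B$, the collections $\phi(\C),\phi(\C')$ are maximal weakly separated collections of $Z$, both containing the frozen family $\phi(\B)$; by the positroid-cell form of the connectivity theorem of \cite{OPS} they are linked by a chain of mutations inside $Z$; by (c) every collection in this chain still contains $\phi(\B)$; and applying (b) together with $\phi^{-1}$ converts the chain into a chain of mutations from $\C$ to $\C'$ all of whose terms contain $\B$. The main obstacle is plainly the construction of $\phi$ and the verification of (b) and (c) simultaneously: freezing a \emph{single} prescribed $k$-set by padding is routine, but freezing every member of a weakly separated collection at once — while preserving property (a), keeping the mutation-compatibility (b) exact, and controlling the cell $Z$ so tightly that its maximal collections consist only of $\phi$-images (so that the upstairs chain cannot wander outside the image) — is the delicate point, and is precisely where the weak separation of $\B$ must be used in an essential way.
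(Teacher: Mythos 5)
There is a genuine gap: your proof consists of a reduction to an unproven construction, and the construction itself is the entire mathematical content. Everything after ``Granting such a $\phi$'' is routine, as you say; but the padding map $\phi$ with properties (a), (b), (c) is never built, and there are concrete reasons to doubt it can be built as stated. Property (c) forces \emph{every} element of $\B$ --- including vertices that lie in the interior of the plabic tiling $\Sigma(\B)$ and are surrounded by $2$-cells in every completion --- to become a boundary vertex of $Z$, i.e.\ an entry of a Grassmann necklace. A Grassmann necklace is a single closed walk $(I_1,\ldots,I_N)$ in which consecutive sets differ by removing $i$ and adding one element, so the removed elements occur in cyclic order, each at most once. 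Threading an arbitrary weakly separated $\B$ onto such a walk, while property (b) simultaneously demands that the interior of the region of $Z$ reproduce exactly the holes of $\Sigma(\B)$ (so that $Z$ has \emph{no} maximal collections other than $\phi$-images), is severely constrained: for example, to freeze a set adjacent to a mutation square by a ``slit'' of the necklace curve, the outgoing slit edge must remove an element of that set, and in rank $2$ every such element is already removed by one of the square's own edges, violating the once-per-element condition; one is pushed into higher rank, nested slit gadgets, and a global consistency problem that you have not analyzed. Worse, verifying (b) --- that no extra maximal weakly separated collections exist upstairs --- is a statement about controlling \emph{all} completions of a prescribed collection, which is essentially the same kind of difficulty as the theorem you are trying to prove; the padding does not dissolve the problem so much as relocate it.

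For contrast, the paper's proof is direct and does not pass through the connectivity theorem of \cite{OPS} at all (the paper notes its argument is new and independent of Postnikov's even for $\B=\emptyset$, whereas your route leans on that theorem essentially). The paper inducts on the uncovered area $A(\B)$ of $\RR^2\setminus\pi(\Sigma(\B))$: it picks an edge $(K,L)$ on the boundary of a hole, looks at the triangle on the hole side of $(K,L)$ in triangulations of $\Sigma(\C_1)$ and $\Sigma(\C_2)$, and splits into two cases. If the two triangles have the same color, their apexes $J_1,J_2$ are weakly separated, so $\B\cup\{J_1,J_2\}$ extends to a common maximal collection and induction applies to the larger collections $\B\cup\{J_r\}$. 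If the colors differ, the two triangles assemble into a mutation square, and Lemma~\ref{lem:cross} (the key lemma: if $Hac$ and $Hbd$ are both weakly separated from $J$, then so are $Hab$, $Hbc$, $Hcd$, $Hda$) shows the whole square is weakly separated from $\B$; one then completes, performs the single mutation $Hac\leftrightarrow Hbd$, and again inducts on area. If you want to salvage your approach, the honest formulation of what you need is a theorem that every hole of $\Sigma(\B)$ is, after relabeling, the region of a positroid and that its fillings are exactly the weakly separated collections of that positroid --- a statement in the spirit of \cite{DKK} that is itself nontrivial and, in this paper's logic, a \emph{consequence} of the main theorem rather than an available ingredient.
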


In other words, if $\sigma$ is a face of the simplicial complex described above, with codimension greater than $1$, then the link of $\sigma$ is connected in codimension $1$.

Even the case $\B = \emptyset$, where this result is due to Postnikov~\cite{Post}, our proof is new and independent of Postnikov's.

\section{Notations}

We will use the following notations through out the paper:
We write $(a, b)$ for the open cyclic interval from $a$ to $b$. In other words,
the set of $i$ such that $a$, $i$, $b$ is cyclically ordered.  We write $[a,
b]$ for the closed cyclic interval, $[a, b] = (a, b) \cup \{ a, b \}$, and use
similar notations for half open intervals.

If $S$ is a subset of $[n]$ and $a$ an
element of $[n]$, we may abbreviate $S \cup \{ a \}$ and $S \setminus \{ a \}$
by $S a$ and $S \setminus a$. 

In this paper, we need to deal with three levels of objects: elements of $[n]$,
subsets of $[n]$, and collections of subsets of $[n]$. For clarity, we will
denote these by lower case letters, capital letters, and calligraphic letters,
respectively. 

The use of the notation $I \setminus J$ does not imply $J \subseteq I$.

\section{Positroids}

More generally,~\cite{OPS} studied weakly separated collections within positroids.
We review this material briefly now; see~\cite{Post} and~\cite{OPS} for more.
A \newword{Grassmann necklace} is a sequence $\I = (I_1, \cdots, I_n)$ of $k$-element subsets of $[n]$ such that, for $i \in [n]$, the set $I_{i+1}$ contains $I_i \setminus \{ i \}$.  (Here the indices are taken modulo $n$.)
If $i \not \in I_{i}$, then we should have $I_{i+1} = I_i$.

Define a linear order $<_i$ on $[n]$ by
$$i <_i i+1 <_i i+2 <_i \cdots <_i i-1.$$
We extend  $<_i$ to $k$ element sets, as follows.
For $I=\{i_1, \cdots, i_k \}$ and $J=\{j_1, \cdots, j_k \}$ with 
$i_1 <_i i_2 \cdots <_i i_k$ and $j_1 <_i j_2 \cdots <_i j_k$, define the partial order
$$
I \leq_i J 
\textrm{ if and only if } i_1 \leq_i j_1, \cdots, i_k \leq_i j_k.
$$
Given a Grassmann necklace $\I=(I_1,\cdots,I_n)$, define the \newword{positroid} $\M_{\I}$ to be
$$
\M_{\I} := \{J \in \binom{[n]}{k} \mid I_i \leq_i J \text{ for all } i \in [n] \}.
$$

Fix a Grassmann necklace $\I=(I_1,\cdots,I_n)$, with corresponding positroid
$\M_{\I}$. Then $\C$ is called a {\it weakly separated collection inside}
$\M_{\I}$ if $\C$ is a weakly separated collection and $\I \subseteq \C
\subseteq \M_{\I}$. We call $\C$ a {\it maximal weakly separated collection
inside} $\M_{\I}$ if it is maximal among weakly separated collections inside
$\M_{\I}$.  

Our actual main result is
\begin{theorem}
Let $\I$ be a Grassmann necklace and let $\B$ be a weakly separated collection in $\M_{\I}$. Let $\C$ and $\C'$ be two maximal weakly separated collections in $\M_{\I}$ containing $\B$. Then $\C$ and $\C'$ are linked by a chain of mutations $\C = \C_1 \to \C_2 \to \cdots \to \C_r = \C'$ where all the $\C_i$ contain $\B$ and are in $\M_{\I}$.
\end{theorem}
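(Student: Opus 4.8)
The plan is to argue by induction on $|\C \setminus \C'|$, the number of sets of $\C$ not already in $\C'$ (this equals $|\C' \setminus \C|$, since all maximal weakly separated collections in $\M_\I$ have the same cardinality). If this number is $0$ then $\C = \C'$ and there is nothing to do. For the inductive step it suffices to establish a \emph{progress lemma}: whenever $\C \ne \C'$ are maximal weakly separated collections in $\M_\I$, there is a mutation on $\C$, staying inside $\M_\I$, that removes a set lying in $\C \setminus \C'$ and inserts a set lying in $\C' \setminus \C$. Such a mutation freezes $\B$ automatically, because the removed set lies in $\C \setminus \C'$ while $\B \subseteq \C \cap \C'$, so no set of $\B$ is ever removed; it decreases $|\C \setminus \C'|$ by exactly one; and it keeps us in $\M_\I$, because the inserted set lies in $\C' \subseteq \M_\I$. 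Applying the inductive hypothesis to $\C_2$ and $\C'$ then yields the desired $\B$-fixing chain. Note that this reduction treats all $\B$ uniformly, including $\B = \emptyset$, so it gives a self-contained argument that subsumes Postnikov's connectivity theorem.

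To produce the mutation it is enough to find a single set $Y \in \C' \setminus \C$, together with $S \in \binom{[n]}{k-2}$ and cyclically ordered $a, b, c, d$ with $Y = S \cup \{b,d\}$, such that the other five sets $S \cup \{a,b\}$, $S \cup \{b,c\}$, $S \cup \{c,d\}$, $S \cup \{d,a\}$, $S \cup \{a,c\}$ all lie in $\C$; Proposition~\ref{mutation} then applies to $\C$ and inserts $Y$. The removed set $S \cup \{a,c\}$ is then automatically in $\C \setminus \C'$: it is not weakly separated from $Y = S \cup \{b,d\}$, since the two realize the forbidden pattern with $a, c$ on one side of the chord and $b, d$ on the other, so $S \cup \{a,c\}$ cannot coexist with $Y$ in the weakly separated collection $\C'$. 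Thus the three requirements of the progress lemma hold at once. The prototype is $k = 2$, where maximal weakly separated collections are triangulations of the $n$-gon, mutations are diagonal flips, and finding the required square amounts to finding a diagonal of $\C'$ whose flip already lies in $\C$; shared (hence uncrossed) diagonals are never touched, which is exactly the freezing of $\B$.

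The main obstacle is the existence of such a $Y$: I must show that some set of $\C' \setminus \C$ is one mutation away from $\C$, that is, completes to a square five of whose sets belong to $\C$. I would select $Y \in \C' \setminus \C$ extremal with respect to the linear orders $<_i$ and analyze the sets of $\C$ that are forced to surround it, using maximality of $\C$ together with the weak-separation prohibition (no cyclically ordered $a, c \in I \setminus J$, $b, d \in J \setminus I$) to supply the missing members of the square. Identifying the correct extremal choice and proving that all five companion sets are genuinely present in $\C$ is the delicate combinatorial heart of the argument; once it is in place, everything else is bookkeeping with cyclic intervals, the orders $<_i$, and the definition of $\M_\I$.
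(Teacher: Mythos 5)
Your reduction to the ``progress lemma'' is exactly where the argument breaks: that lemma is false, so the ``delicate combinatorial heart'' you defer cannot be supplied by any choice of extremal $Y$. The cleanest counterexample is your own prototype $k=2$. There, maximal weakly separated collections in $\binom{[n]}{2}$ are triangulations of a convex $n$-gon and mutations are diagonal flips, so your lemma asserts: for any two distinct triangulations $T \neq T'$ there is a flip of $T$ that inserts a diagonal of $T'$ (you correctly note the removed diagonal then automatically lies outside $T'$). Since every flip changes exactly one diagonal, the flip distance is always at least $|T \setminus T'|$, and your induction would force it to equal $|T \setminus T'|$, which is at most $n-3$. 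But the diameter of the flip graph of the convex $n$-gon (the associahedron) is $2n-10$ for large $n$ (Sleator--Tarjan--Thurston; Pournin later proved exactness for all $n \ge 13$), and $2n-10 > n-3$ once $n > 7$. Hence there exist pairs $T$, $T'$ for which no flip of $T$ inserts a diagonal of $T'$: any path between them must pass through diagonals belonging to neither triangulation. So the inductive framework itself, not just the unproved step, is unsalvageable in the form you propose.

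The paper sidesteps any progress-type statement entirely. Rather than inducting on $|\C \setminus \C'|$ (shrinking the symmetric difference), it inducts on the area of the holes of the plabic tiling $\Sigma(\B)$ (growing the frozen collection): it picks an edge $(K,L)$ on the boundary of a hole, takes the triangles of triangulations of $\Sigma(\C_1)$ and $\Sigma(\C_2)$ adjacent to that edge on the hole side, and enlarges $\B$ by those triangles' vertices, passing through auxiliary maximal collections that contain the enlarged $\B$. When the two triangles have the same color this is immediate; when they have opposite colors it uses Lemma~\ref{lem:cross} to build a collection containing all six sets $Hab$, $Hbc$, $Hcd$, $Hda$, $Hac$, $Hbd$ minus one, together with a single explicit mutation of Proposition~\ref{mutation} to swap $Hac$ for $Hbd$. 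Crucially, the intermediate collections pass through sets lying in neither $\C_1$ nor $\C_2$, and the resulting chain length is not bounded by $|\C_1 \setminus \C_2|$ --- consistent with the associahedron diameter obstruction above. If you want to pursue your route, you would have to allow mutations that insert sets outside $\C' $, at which point the induction on $|\C \setminus \C'|$ collapses and some other measure of progress (such as the paper's area of uncovered region) is needed.
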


The case $I_i = \{ i,i+1, \ldots, i+k-1 \}$ corresponds to taking $\M_{\I}$ to be all of $\binom{[n]}{k}$. This result also implies the main result of \cite{DKK}.

\section{Plabic tilings} \label{sec:tilings}

In this section, we review the plabic tiling construction from~\cite{OPS}.
The motivation for this construction is as follows: The main result of~\cite{OPS} is that maximal weakly separated collections are in bijection with certain planar bipartite graphs called ``reduced plabic graphs". The planar dual of a reduced plabic graph is thus a bi-colored CW complex, homeomorphic to a two-dimensional disc. 
The plabic tiling construction assigns a  bi-colored two-dimensional CW complex to any weakly separated collection, maximal or not. 
For the purposes of this picture, we only need plabic tilings, not plabic graphs.

\begin{figure}
\centerline{
\begin{postscript}
\scalebox{0.6}{
\psfrag{123}[cc][cc]{\Large $123$}
\psfrag{234}[cc][cc]{\Large $234$}
\psfrag{345}[cc][cc]{\Large $345$}
\psfrag{456}[cc][cc]{\Large $456$}
\psfrag{567}[cc][cc]{\Large $567$}
\psfrag{167}[cc][cc]{\Large $167$}
\psfrag{127}[cc][cc]{\Large $127$}
\psfrag{134}[cc][cc]{\Large $134$}
\psfrag{135}[cc][cc]{\Large $135$}
\psfrag{136}[cc][cc]{\Large $136$}
\psfrag{126}[cc][cc]{\Large $126$}
\psfrag{356}[cc][cc]{\Large $356$}
\psfrag{156}[cc][cc]{\Large $156$}
\psfrag{v1}[cc][cc]{\Large $v_1$}
\psfrag{v2}[cc][cc]{\Large $v_2$}
\psfrag{v3}[cc][cc]{\Large $v_3$}
\psfrag{v4}[cc][cc]{\Large $v_4$}
\psfrag{v5}[cc][cc]{\Large $v_5$}
\psfrag{v6}[cc][cc]{\Large $v_6$}
\psfrag{v7}[cc][cc]{\Large $v_7$}
\includegraphics{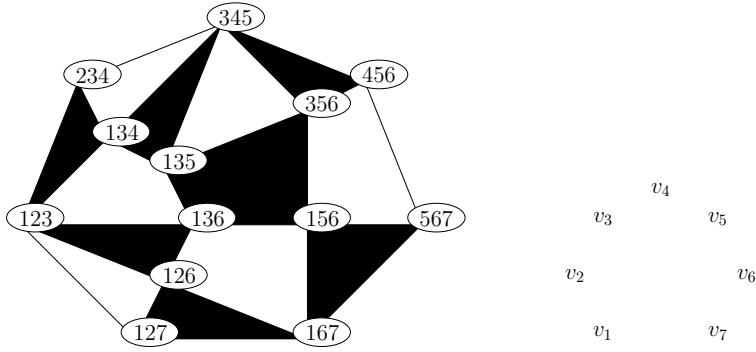}}
\end{postscript}
}
\caption{Example of a plabic tiling}\label{TilingExample}
\end{figure}

Let us fix $\C$, a weakly separated collection in $\M_{\I}$. For $I$ and $J \in \M_{\I}$, say that $I$ \newword{neighbors} $J$ if 
$$|I \setminus J|=|J \setminus I|=1.$$
Let $K$ be any $(k-1)$ element subset of $[n]$. We define the \newword{white clique} $\W(K)$ to be the set of $I \in \C$ such that $K \subset I$. Similarly, for $L$ a $(k+1)$ element subset of $[n]$, we define the \newword{black clique} $\B(L)$ for the set of $I \in \C$ which are contained in $L$. We call a clique \newword{nontrivial} if it has at least three elements. 
Observe that, if $\mathcal{X}$ is a nontrivial clique, then it cannot be both black and white. 


Observe that a white clique $\W(K)$ is of the form $\{ K a_1, K a_2, \ldots, K a_r \}$ for some $a_1$, $a_2$, \dots, $a_r$, which we take to be cyclically ordered. Similarly, $\B(L)$ is of the form $\{ L \setminus b_1, L \setminus b_2, \ldots, L \setminus b_s \}$, with the $b_i$'s cyclically ordered. 
If $\W(K)$ is nontrivial, we define the boundary of $\W(K)$ to be the cyclic graph 
$$(K a_1) \to (K a_2) \to \cdots \to (K a_r) \to (K a_1).$$ 
Similarly, the boundary of a nontrivial $\B(L)$ is 
$$(L \setminus b_1) \to (L \setminus b_2) \to \cdots \to (L \setminus b_s) \to (L \setminus b_1).$$ 
If $(J, J')$ is a two element clique, then we define its boundary to be the graph with a single edge $(J,J')$; we define an one element clique to have empty boundary.


We now define a two dimensional CW-complex $\Sigma(\C)$.
The vertices of $\Sigma(\C)$ will be the elements of $\C$.
There will be an edge $(I,J)$ if
\begin{enumerate}
\item $\W(I \cap J)$ is nontrivial and $(I,J)$ appears in the boundary of $\W(I \cap J)$ or
\item $\B(I \cup J)$ is nontrivial and $(I,J)$ appears in the boundary of $\B(I \cup J)$ or
\item $\W(I \cap J) = \B(I \cup J)  = \{ I, J \}$.
\end{enumerate}
There will be a (two-dimensional) face of $\Sigma(\C)$ for each nontrivial clique $\mathcal{X}$ of $\C$. 
The boundary of this face will be the boundary of $\mathcal{X}$. 
We will refer to each face of $\Sigma(\C)$ as \newword{black} or \newword{white}, according to the color of the corresponding clique. We call a CW-complex of the form $\Sigma(\C)$ a \newword{plabic tiling}. An implicit claim here is that, if $\W(I \cap J)$ and $\B(I \cup J)$ are both nontrivial, then $(I,J)$ is a boundary edge of both, so that $2$-dimensional faces of $\Sigma(\C)$ are glued along boundary edges. This is not obvious, but it is true; see~\cite[Lemma~9.2]{OPS}.

So far, $\Sigma(\C)$ is an abstract CW-complex. Our next goal is to embed it in a plane.

Fix $n$ points $v_1$, $v_2$, \dots, $v_n$ in $\RR^2$, at the vertices of a convex $n$-gon in clockwise order. 
Define a linear map $\pi: \RR^n \to \RR^2$ by $e_{a} \mapsto v_{a}$. 
For $I \in \binom{[n]}{t}$, set $e_I = \sum_{a \in I} e_{a}$. 
We abbreviate $\pi(e_I)$ by $\pi(I)$.

%

We extend the map $\pi$ to a map from $\Sigma(\C)$ to $\RR^2$ as follows: Each vertex $I$ of $\Sigma(\C)$ is sent to $\pi(I)$ and each face of $\Sigma(\C)$ is sent to the convex hull of the images of its vertices. 
We encourage the reader to consult Figure~\ref {TilingExample} and see that the vector $\pi(Si) - \pi(Sj)$ is a translation of $v_i - v_j$.\footnote{This figure is extremely similar to  \cite[Figure 9]{OPS}, we have redrawn it to avoid issues of figure reuse.}

 We define $\pi(\I)$ to be the closed polygonal curve whose vertices are, in order, $\pi(I_1)$, $\pi(I_2)$, \dots, $\pi(I_n)$, $\pi(I_1)$.
 We now summarize the main results of \cite{OPS} concerning plabic tilings:

\begin{proposition} [{\cite[Prop. 9.4, Prop. 9.8, Prop 9.10,  Theorem 11.1]{OPS}}] \label{topsame}
With the above notation, $\pi(\I)$ is a simple closed curve, except that if $\I$ has repeated elements then $\pi(\I)$ may touch itself at those vertices, in a manner which can be perturbed to a simple closed curve.
If $\C$ is a weakly separated collection in $\M_{\I}$, then the map $\pi: \Sigma(\C) \to \RR^2$ is injective, and its image lands inside the curve $\pi(\I)$

The collection $\C$ is maximal among weakly separated collections in $\M_{\I}$ if and only if $\Sigma(\C)$ fills the entire interior of the curve $\pi(\I)$. 

If $J$ is weakly separated from all elements of $\I$, then $J \in \M_{\I}$ if and only if $\pi(J)$ is inside the curve $\pi(\I)$.
\end{proposition}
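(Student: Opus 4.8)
The plan is to derive all four assertions from a single geometric picture, built up from a local analysis of the edges and faces meeting at each vertex of $\Sigma(\C)$. The basic computation to record first is that $\pi(e_I)=\sum_{a\in I}v_a$, so that passing from a set $Sa$ to a set $Sb$ sharing the $(k-1)$-element core $S$ translates the image by $v_a-v_b$, and dually for black cliques; hence every edge of $\Sigma(\C)$ is a translate of a chord of the convex $n$-gon and every two-dimensional face is a genuine convex polygon. The first assertion concerns the degenerate collection $\C=\I$: the necklace condition $I_{i+1}\supseteq I_i\setminus\{i\}$ means each step either removes $i$ and inserts one larger element $m_i$ (contributing the edge direction $v_{m_i}-v_i$) or stands still (when $i\notin I_i$, producing the self-touching permitted in the statement). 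I would show that these directions turn monotonically as $i$ runs once around $[n]$, their arguments increasing by a total of $2\pi$; a closed polygonal curve whose edge directions turn monotonically does not cross itself, which gives simplicity, the only failures being repeated necklace elements that make the curve revisit a point and are removed by a small perturbation.

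Next, for a general weakly separated $\C$ in $\M_{\I}$, I would establish injectivity of $\pi$ on $\Sigma(\C)$ together with containment inside $\pi(\I)$ by a local-to-global argument. The local step fixes a vertex $I\in\C$ and studies the cyclic arrangement of edges leaving it: each such edge records swapping one element of $I$, and the defining property of weak separation forbids a crossing quadruple $a,b,c,d$, which is exactly the condition needed to cyclically order all the swap directions $v_a-v_b$ and to fit the incident white cliques $\W(K)$ and black cliques $\B(L)$ around $I$ so that they cover a single angular sector without overlap. Combined with the gluing fact that adjacent faces meet along boundary edges (\cite[Lemma~9.2]{OPS}), this makes $\pi$ an orientation-preserving local homeomorphism on the interior of $\Sigma(\C)$. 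To pass from local to global injectivity, I would identify the boundary of $\Sigma(\C)$ with the necklace data: the only vertices whose angular sectors are incomplete are those forced to lie on $\pi(\I)$, so $\pi$ restricts to the embedding of part one on the boundary; a local homeomorphism from a finite CW disc that embeds its boundary onto a simple closed curve is itself an embedding with image the bounded region, by a winding-number argument.

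For the third assertion, one direction is immediate: if $\Sigma(\C)$ fills the entire interior of $\pi(\I)$ then every interior point is already covered, leaving no room for a new vertex, so $\C$ is maximal. For the converse I would argue contrapositively, exhibiting in any uncovered region a $k$-set $J$ that is weakly separated from all of $\C$ and lies in $\M_{\I}$, so that $\C\cup\{J\}$ is strictly larger. The fourth assertion is the algebraic shadow of the convexity from part one: each defining inequality $I_i\leq_i J$ of $\M_{\I}$ translates under $\pi$ into the half-plane bounded by the supporting line of $\pi(\I)$ along the edge at position $i$, so that $J\in\M_{\I}$ holds precisely when $\pi(J)$ lies on the inner side of every boundary edge, that is, inside the curve.

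The main obstacle is the global injectivity in the second assertion. The local analysis only certifies that $\pi$ is a local embedding, and ruling out that $\Sigma(\C)$ folds over on itself or wraps more than once requires genuinely topological input, which I would supply by the winding-number argument above, crucially using that the boundary already embeds onto the simple curve $\pi(\I)$. The second delicate point is making the maximality argument effective: in an uncovered region one must produce an explicit $J$ and verify weak separation against every element of $\C$ and membership in $\M_{\I}$ at the same time, and it is here that the full strength of the plabic-tiling combinatorics, rather than soft topology, is needed.
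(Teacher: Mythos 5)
You should first know that the paper does not prove Proposition~\ref{topsame} at all: it is imported verbatim from \cite{OPS} (Props.~9.4, 9.8, 9.10 and Theorem~11.1), so your proposal must be measured against those proofs, and it contains a concretely false step. The pivot of your argument for the first assertion is that the edge directions of $\pi(\I)$ ``turn monotonically\dots by a total of $2\pi$,'' which would make $\pi(\I)$ a convex polygon. That is false in general. Take $n=6$, $k=2$, with $v_1,\dots,v_6$ the vertices of a regular hexagon centered at the origin (so $v_{i+3}=-v_i$), and the Grassmann necklace $\I=(\{1,2\},\{2,5\},\{3,5\},\{4,5\},\{5,6\},\{1,6\})$; one checks directly that $I_{i+1}\supseteq I_i\setminus\{i\}$ for every $i$. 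Then $\pi(I_2)=v_2+v_5=0$, while $\pi(I_1)=v_1+v_2$ and $\pi(I_4)=v_4+v_5=-(v_1+v_2)$ are antipodal, so $\pi(I_2)$ lies on the open segment joining $\pi(I_1)$ to $\pi(I_4)$ and strictly inside the convex hull of the other vertices of the curve. Here $\pi(\I)$ is still simple, but it has a reflex vertex at $\pi(I_2)$ and the turning of edge directions is not monotone (the turn at $\pi(I_2)$ has the wrong sign). So simplicity cannot be deduced from convexity, and any correct proof must handle non-convex boundary curves. The same error sinks your argument for the last assertion: since the interior of $\pi(\I)$ is generally not convex, it is not an intersection of half-planes supported along its edges; moreover $I_i\leq_i J$ is a coordinatewise comparison in the Gale order $<_i$, not a linear inequality in $e_J$, and your argument never invokes the hypothesis that $J$ is weakly separated from the elements of $\I$, which is precisely what makes that assertion true.

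There are two further gaps. Your local-to-global injectivity argument treats $\Sigma(\C)$ as ``a finite CW disc that embeds its boundary onto a simple closed curve,'' but the proposition concerns arbitrary weakly separated collections $\C$ in $\M_{\I}$, and for non-maximal $\C$ the complex $\Sigma(\C)$ is typically not a disc: it can be disconnected, can have isolated vertices, and can have holes --- indeed the proof of Theorem~\ref{thm:main} in this paper revolves entirely around the holes of $\Sigma(\B)$. A winding-number argument for a local homeomorphism requires the disc hypothesis, so at best your argument addresses the maximal case and leaves the general case to an unstated (and not routine) reduction. Finally, the hard direction of the third assertion --- that maximality forces $\Sigma(\C)$ to fill the interior of $\pi(\I)$ --- is exactly the purity theorem of \cite{OPS}; producing, inside an arbitrary uncovered region, a $k$-set $J$ weakly separated from every element of $\C$ and lying in $\M_{\I}$ is the main content of that theorem, and your proposal asserts it rather than proves it. You flag this last point honestly, but as written the proposal establishes none of the four assertions in the stated generality.
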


We will sometimes speak of \newword{triangulating} $\Sigma(\C)$, meaning to take each $2$-cell of $\Sigma(\C)$ and divide it into triangles. 
Coloring these triangles with the color of the corresponding $2$-cells, the vertices of a white triangle are of the form $(Sa, Sb, Sc)$ for some $k-1$ element set $S$ and some $a$, $b$, $c \in [n] \setminus S$. The vertices of a black triangle are of the form $(S \setminus a, S \setminus b, S \setminus c)$ for some $k+1$ element set $S$ and some $a$, $b$, $c \in S$. 
Note that the image of a triangle under $\pi$ is a translate of $\mathrm{Hull}(v_a, v_b, v_c)$ or $\mathrm{Hull}(-v_a, -v_b, -v_c)$ respectively.
The triangle is oriented clockwise if $(a,b,c)$ are cyclically ordered.

\section{A lemma regarding mutations}

We will need the following lemma.
\begin{lemma} 
\label{lem:cross}
Let $H$ be a subset of $[n]$ of cardinality $k-2$; let $a$, $b$, $c$, $d$ be circularly ordered elements in $[n] \setminus H$.
Let $J$ be another $k$ element subset of $[n]$.
Suppose that $Hac$ and $Hbd$ are weakly separated with $J$. Then $Hab$, $Hbc$, $Hcd$ and $Hda$ are weakly separated with $J$.
\end{lemma}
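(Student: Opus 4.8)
The plan is to reduce to a single conclusion by symmetry, rewrite weak separation in terms of crossings, and then show that any crossing of $Hab$ with $J$ must already be a crossing of one of the two diagonals $Hac$, $Hbd$ with $J$. First I would observe that the hypotheses are symmetric under cyclically relabelling $(a,b,c,d) \mapsto (b,c,d,a)$: this preserves cyclic order, merely swaps the two diagonals $Hac \leftrightarrow Hbd$, and carries the conclusion $Hab \parallel J$ to $Hbc \parallel J$. Iterating, the four conclusions $Hab, Hbc, Hcd, Hda$ are equivalent, so it suffices to prove $Hab \parallel J$.

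Next I would set up the crossing test. Recall that $X \parallel J$ exactly when there are no cyclically ordered $p,q,r,s$ with $p,r \in X \setminus J$ and $q,s \in J \setminus X$, equivalently when a chord separates $X\setminus J$ from $J\setminus X$. Writing $H_0 = H\setminus J$ and $J_0 = J\setminus(H\cup\{a,b,c,d\})$, every point other than $a,b,c,d$ plays the same role for all of $Hab$, $Hac$, $Hbd$: points of $H_0$ always lie in $X\setminus J$ and points of $J_0$ always lie in $J\setminus X$. The only points whose role changes are $a,b,c,d$, and I would tabulate them: a point $x\in\{a,b,c,d\}$ with $x\in X$ contributes to $X\setminus J$ when $x\notin J$ (and is neutral when $x\in J$), while $x\notin X$ contributes to $J\setminus X$ when $x\in J$ (and is neutral otherwise).

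Now suppose for contradiction that $Hab$ has a crossing $p,q,r,s$ with $J$. If the crossing uses no special point it is literally a crossing of $Hac$ with $J$, contradicting the hypothesis. The table says more: the crossing survives verbatim for $Hac$ unless it uses $b$ among $p,r$ or $c$ among $q,s$, and it survives for $Hbd$ unless it uses $a$ among $p,r$ or $d$ among $q,s$. Hence the only configurations not immediately ruled out are the genuinely \emph{two-sided} ones, which simultaneously use a point of $\{a,d\}$ and a point of $\{b,c\}$; up to the reflection fixing $Hab$ the representative case is a crossing through $a$ on the $X\setminus J$ side (so $a\notin J$) and $c$ on the $J\setminus X$ side (so $c\in J$), together with one point of $H_0$ and one of $J_0$, with the all-positive variant $\{a,b\}$ and all-negative variant $\{c,d\}$ handled the same way.

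The crux is to eliminate this two-sided case, and here I would use the cyclic order $a,b,c,d$ together with both separation hypotheses at once. The two $X\setminus J$ points of the crossing ($a$ and an $H_0$ point) remain positive for $Hac$, so $Hac\parallel J$ forces every point of $J\setminus Hac$ into the single arc they cut out that contains the crossing's $J_0$ point; symmetrically the two $J\setminus X$ points ($c$ and a $J_0$ point) remain negative for $Hbd$, so $Hbd\parallel J$ confines every point of $Hbd\setminus J$ to one prescribed arc. Because $b$ lies on the arc between $a$ and $c$ and $d$ on the opposite arc, tracing these forced positions (and checking the handful of sub-positions of the two auxiliary crossing points) produces either a second point of $J\setminus Hac$ interleaving the positive points of $Hac$, or a second point of $Hbd\setminus J$ interleaving the negative points of $Hbd$ — a crossing of one diagonal with $J$, the desired contradiction. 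I expect this final step, matching the two arc-constraints through the cyclic positions of $b$ and $d$, to be the main obstacle; everything preceding it is bookkeeping.
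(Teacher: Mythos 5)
Your proposal has a genuine gap: the step you yourself flag as ``the main obstacle'' is the entire mathematical content of the lemma, and it is never carried out. Everything you do complete --- the symmetry reduction to $Hab \parallel J$, the observation that a crossing of $Hab$ with $J$ avoiding $\{a,b,c,d\}$ is verbatim a crossing of $Hac$ with $J$, and the tabulation of which special points must appear for the crossing to fail to transfer to both diagonals --- is, as you say, bookkeeping; it rules out nothing beyond the trivial configurations. The surviving configurations (two-sided $(a,c)$ or $(b,d)$, all-positive $\{a,b\}$, all-negative $\{c,d\}$) are exactly where the hypotheses $Hac \parallel J$ and $Hbd \parallel J$ must interact, and ``tracing these forced positions ... produces a crossing'' is a statement of hope, not an argument.

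Worse, the hoped-for conclusion is not always available, so the plan cannot be completed as stated. Consider the all-positive configuration: a crossing $a, q, b, c$ of $Hab$ with $J$, where $q \in J \setminus (H \cup \{a,b,c,d\})$ lies in $(a,b)$ and $c \in J$. Applying $Hbd \parallel J$ forces $d \in J$ and $(H\setminus J) \cap (c,q) = \emptyset$; then applying $Hac \parallel J$ forces $(H \setminus J) \cap (q,d) = \emptyset$. Together these confine $H \setminus J$ to $[q,c] \cap [d,q] = \{q\}$, and since $q \in J$ this gives $H \setminus J = \emptyset$. At this point there is no crossing of either diagonal with $J$ to be found; the contradiction is a \emph{counting} one: $H \subseteq J$ together with $q, c, d \in J \setminus H$ gives $|J| \geq k+1$, contradicting $|J| = k$. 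This cardinality argument, which your sketch never invokes (your plan promises only crossings of $Hac$ or $Hbd$ as the contradiction), is precisely how the paper's proof concludes. The paper's argument is also organized differently and more efficiently: rather than classifying crossings, it first pins down membership of the special points ($c, d \in J$, then $a, b \notin J$), then locates a witness $q \in J \setminus H$ in $(a,b)$, shows $Hac \setminus J \subseteq [d,q]$ and $Hbd \setminus J \subseteq [q,c]$, hence $H \setminus J = \emptyset$, and finishes with exactly the counting contradiction above. To repair your proof you would need to carry out the case analysis in full and, crucially, add the cardinality argument for the sub-cases where no crossing of a diagonal materializes.
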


The relevance of this lemma is as follows: Suppose that $\C$ is a weakly separated collection which contains $Hac$ and all of whose elements other than $Hac$ are weakly separated from $Hbd$. 
Then the lemma shows that $\C' := \C \cup \{ Hab, Hbc, Hcd, Had \}$ is weakly separated. Extending $\C'$ to some maximal weakly separated collection $\C_{\max}$, we can mutate $\C_{\max}$ to change $Hac$ to $Hbd$. So the lemma shows that, if a weakly separated collections looks like it should be mutable in a certain manner, then it can be extended to a maximal weakly separated collection which is mutable in that manner.

\begin{proof}
We will show $Hab \parallel J$, the cases of $Hbc$, $Hcd$ and $Had$ are similar. Assume for the sake of contradiction that $Hab$ and $J$ are not weakly separated. 

From $Hab \not \parallel J$ and $Hac \parallel J$, there is some element of $J \setminus H$ that is in the interval $(b,c]$. Similarly, there is some element of $J \setminus H$ that is in the interval $[d,a)$. Combining these two information with $Hac \parallel J$, gives us that $c \in J$. Similarly, $d \in J$.

Now if $a \in J$, then $Hbd \parallel J$ would imply $Hab \parallel J$. Hence $a \not \in J$, and due to similar reason, $b \not \in J$.

For $Hab \not \parallel J$ to happen despite $Hac \parallel J$ and $Hbd \parallel J$ being true, we need to have some element of $J \setminus H$, say $q$, in the interval $(a,b)$. From $Hac \parallel J$, we get $Hac \setminus J \subset [d,q]$. From $Hbd \parallel J$, we get $Hbd \setminus J \subset [q,c]$. Combining these two facts, we get $H \setminus J \subset [d,q] \cap [q,c]$. Therefore, $H \setminus J$ has to be an empty set, and we get a contradiction since $|Hab| = |J|$.

\end{proof}

\section{Main result}

In this section, we will prove our main result: if $\C_1$ and $\C_2$ are maximal weakly separated collections of some positroid $\M_{\I}$, then $\C_1$ can be mutated to $\C_2$ while preserving the sets they have in common. Throughout this section, we will fix a positroid $\M$ and its Grassmann necklace $\I$. 

Let $\B$ be a weakly separated collection contained in $\W$.  We will say that two maximal weakly separated collections $\C$ and $\C'$ are \newword{$\B$-equivalent within $\M_{\I}$} if there is a chain of maximal weakly separated collections $\C = \C^1 \rightarrow \C^2 \rightarrow \cdots \rightarrow \C^{q-1} \rightarrow \C^{q} = \C'$, such that:
\begin{itemize}
\item $\B \subseteq \C^1,\ldots,\C^q$,
\item $\C^{i+1}$ is obtained from $\C^i$ by one mutation move.
\item All the $\C^i$ obey $\I \subseteq \C^i \subseteq \M_{\I}$.
\end{itemize}

\begin{Theorem} \label{thm:main}
If $\C$ and $\C'$ are maximal weakly separated collections within $\M_{\I}$ containing $\B$, then $\C$ and $\C'$ are $\B$-equivalent within $\M_{\I}$. 
\end{Theorem}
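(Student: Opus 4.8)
The plan is to induct on the codimension of $\B$, that is, on the quantity $k(n-k)+1 - |\B|$, reducing the general statement to the case where $\B$ is as large as possible. The base case is when $\B$ itself is maximal, where there is nothing to prove, and the opposite extreme $\B = \emptyset$ is the already-stated connectivity result (Postnikov's theorem, reproved in \cite{OPS}). So the real content is a downward induction: assuming the theorem holds for every weakly separated collection strictly larger than $\B$, I want to prove it for $\B$.

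First I would set up the geometric picture. Using Proposition~\ref{topsame}, I identify each maximal weakly separated collection $\C \supseteq \B$ with a plabic tiling $\Sigma(\C)$ filling the interior of the fixed curve $\pi(\I)$. The elements of $\B$ are a subcollection of common vertices, and since all elements of $\B$ are mutually weakly separated, $\Sigma(\B)$ is itself a (not necessarily full-dimensional) subcomplex that embeds inside $\pi(\I)$. The key structural idea is that $\Sigma(\B)$ subdivides the interior of $\pi(\I)$ into regions, and every maximal collection $\C \supseteq \B$ restricts, region by region, to a maximal weakly separated collection \emph{inside the sub-positroid cut out by that region's boundary}. Making this precise is the crux: I would argue that the boundary of each complementary region of $\Sigma(\B)$ is itself the image $\pi(\I_R)$ of a Grassmann necklace $\I_R$ for a smaller positroid $\M_{\I_R}$ (after an appropriate shift, since the vertices of a region need not all have the same cardinality $k$, but faces of a fixed region lie in a bounded range and can be normalized). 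Then a mutation of $\C$ that freezes $\B$ is precisely a mutation taking place inside one of these regions, and conversely.

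The main step is then a divide-and-conquer argument. Given $\C$ and $\C'$ both containing $\B$, I decompose each into its restrictions to the regions $R$ cut out by $\Sigma(\B)$. Within a single region $R$, the two restrictions $\C|_R$ and $\C'|_R$ are maximal weakly separated collections inside the smaller positroid $\M_{\I_R}$, both containing the boundary necklace $\I_R$ (which plays the role of $\B = \emptyset$ for that region). If the region $R$ has strictly smaller ``size'' than the whole disc --- which happens as soon as $\B \neq \emptyset$ --- then by the inductive hypothesis (applied with the empty extra constraint, or equivalently the base case of connectivity applied inside $\M_{\I_R}$) the two restrictions are linked by a chain of mutations inside $R$, each of which freezes the boundary and hence freezes $\B$. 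Performing these region-by-region chains one region at a time, and checking that a mutation confined to $R$ never disturbs the tiling in the other regions (because the mutated square $Hab, Hbc, Hcd, Hda, Hac \leftrightarrow Hbd$ lies entirely in the closure of a single region, its five/six sets all being vertices interior to or on the boundary of $R$), I assemble a global chain from $\C$ to $\C'$ through collections all containing $\B$. Lemma~\ref{lem:cross} is the tool that guarantees the intermediate collections really are weakly separated and that the mutated sets stay inside $\M_{\I}$.

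The hard part will be the geometric bookkeeping that makes ``restrict to a region'' into a clean bijection with maximal weakly separated collections of a genuine smaller positroid: I must show that the boundary of each complementary region of $\Sigma(\B)$ is a valid necklace curve, that the cardinalities work out after normalization so that Proposition~\ref{topsame} applies verbatim to each region, and --- most delicately --- that every element of $\C$ lies in the closure of exactly one region (so the decomposition is genuinely a partition of $\C \setminus \B$ up to the shared boundary) and that mutations respect this partition. Once this ``gluing lemma'' is in hand, the induction itself is short, but verifying that the pieces are themselves positroids of strictly smaller complexity, so that the induction terminates, is where the technical weight lies.
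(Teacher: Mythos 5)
Your geometric setting (plabic tilings, the holes of $\Sigma(\B)$) is the right one, but the ``gluing lemma'' you defer to the end is not bookkeeping: it is the entire difficulty, it is left unproved, and the justification you sketch for it fails. You need (i) that the boundary of each complementary region $R$ of $\Sigma(\B)$ is the curve $\pi(\I_R)$ of a genuine Grassmann necklace, and (ii) that each maximal $\C \supseteq \B$ restricts to a maximal weakly separated collection of $\M_{\I_R}$, with the mutations freezing $\B$ being exactly the mutations supported in single regions. Point (ii) cannot be obtained the way you suggest (``every element of $\C$ lies in the closure of exactly one region \dots mutations respect this partition''), because $\Sigma(\B)$ is not in general a subcomplex of $\Sigma(\C)$: cliques grow when you pass from $\B$ to $\C$. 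Concretely, let $(Sa,Sb)$ be an edge of $\Sigma(\B)$ on a hole boundary, say with $\W_{\B}(S)=\{Sa,Sb,Sd\}$ and the hole on the other side of the chord from $\pi(Sd)$, and suppose $\C$ also contains $Sc$ with $c\in(a,b)$, so $a,c,b,d$ are in cyclic order. Then the $2$-cell of $\Sigma(\C)$ for the clique $\W_{\C}(S)\supseteq\{Sa,Sc,Sb,Sd\}$ contains the segment $\pi(Sa)\pi(Sb)$ as an \emph{interior chord}: it straddles your region boundary, lying partly inside the hole and partly outside. So the faces of $\Sigma(\C)$ do not sort themselves into your regions, and the claim that restriction to $R$ yields a maximal collection of a positroid (let alone compatibly with mutations) needs a genuinely new argument, one whose weight is comparable to the theorem itself. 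Relatedly, Lemma~\ref{lem:cross} cannot do the job you assign it: it deduces that $Hab$, $Hbc$, $Hcd$, $Hda$ are weakly separated from $J$ \emph{given that both diagonals} $Hac$ and $Hbd$ are, whereas a mutation inside $R$ needs the incoming diagonal to be weakly separated from every set outside $R$; that is Proposition~\ref{mutation}, not Lemma~\ref{lem:cross}. (Also, your worry about normalizing cardinalities is empty: every vertex of $\Sigma(\B)$ is a $k$-element set.)

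Moreover, the proposed induction on $k(n-k)+1-|\B|$ never actually functions. After the harmless normalization $\I \subseteq \B$, the smallest allowed $\B$ is $\I$ itself, and then there is exactly one region --- the whole interior of $\pi(\I)$, with $\I_R=\I$ --- so your inductive step reduces that instance to itself; the same degeneration occurs whenever $\B$ lies on the boundary curve, contrary to your claim that any nonempty $\B$ strictly shrinks the regions. What your argument really rests on is the connectivity theorem of \cite{Post} and \cite{OPS}, invoked as a black box inside each region positroid; even granting (i) and (ii), this is a reduction to known results rather than an induction, and it builds on precisely the result the paper's proof is designed to avoid (the paper is explicitly independent of Postnikov, even for $\B=\emptyset$). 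The paper instead inducts on the area $A(\B)$ of the holes of $\Sigma(\B)$, in units of the minimal triangle area $\delta$: fix a single hole-boundary edge $(K,L)$, triangulate $\Sigma(\C_1)$ and $\Sigma(\C_2)$ using that edge, and compare the colors of the two triangles on the hole side. If the colors agree, one vertex is added to $\B$ and both collections are joined through a common completion; if they disagree, the four sets form a mutation square $(Hac,Hbd,Hab,Had)$, Lemma~\ref{lem:cross} (applied with the two diagonals weakly separated from $\B$, one coming from each $\C_r$) shows all six relevant sets are weakly separated from $\B$, and a single explicit mutation plus the inductive hypothesis finishes. That local, one-edge-at-a-time argument requires no region decomposition, no sub-positroid necklaces, and no appeal to prior connectivity theorems.
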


\begin{proof}
Since weakly separated collections in $\M_{\I}$ contain $\I$ by definition, we may assume that $\I \subset \B$.
So the condition $\I \subseteq \C^i$ will follow from $\B \subseteq \C^i$, and we will only need to check that the $\C^i$ are contained in $\M_{\I}$.

Let $\Sigma(\B)$ be the $2$-dimensional CW-complex defined in the previous section associated to $\B$. We fix a map $\pi$ as in the previous section. 
So $\pi(\Sigma(\B))$ is a closed region of $\RR^2$, whose exterior boundary is $\pi(\I)$.
Let $A(\B)$ be the area of the bounded regions of $\RR^2 \setminus \pi(\Sigma(\B))$.
Let $\delta = \min \mathrm{Area}(\mathrm{Hull}(v_a, v_b, v_c))$ where the minimum is over $1 \leq a < b < c \leq n$. 
So $\delta$ is the smallest possible area of triangle appearing in a triangulation of some $\Sigma(\C)$.
Our proof is by induction on $\lceil A(\B) /\delta \rceil$. If $t:= \lceil A(\B) /\delta \rceil=0$ then $A(\B)=0$ and $\Sigma(\B)$ fills the entire interior of $\pi(\I)$, so $\B$ is maximal in $\M_{\I}$ and $\B$ is the only maximal weakly separated collection in $\M_{\I}$ containing $\B$, so the Theorem is vacuously true.

Now, suppose that $A(\B)>0$. So there is some hole within $\pi(\Sigma(\B))$. 
Let $K$ and $L$ be the $k$-element sets labeling two consecutive elements on the boundary of the hole.
Let $\C_1$ and $\C_2$ be two maximal weakly separated collections in $\M_{\I}$ containing $\B$. Then $K$ and $L$ lie in a common face of $\Sigma(\C_r)$ (for $r=1$, $2$.)
Triangulate $\Sigma(\C_r)$ using the edge $(K,L)$. Let $J_r$  be the third vertex of the triangle of $\Sigma(\C_r)$ containing $(K,L)$ and lying on the hole side. Let $T_r$ be the triangle $(J_r, K, L)$. We now divide into 2 cases depending on the colors of the triangles $T_r$.

\textbf{Case 1:}  $T_1$ and $T_2$ are both white or both black. We present the case that the triangles are white; the other case is very similar. 
Set $H = K \cap L$.
Then the $J_r$ are of the forms $H  e_r$ for some $e_1$ and $e_2$. 
From this we can compute that $J_1$ and $J_2$ are weakly separated from each other. Also, by hypothesis, $\B \cup \{ J_1 \}$ and $\B \cup \{ J_2 \}$ are weakly separated. 
So $\B \cup \{ J_1, J_2 \}$ is weakly separated; complete $\B \cup \{ J_1, J_2 \}$ to a maximal weakly separated collection $\C'$ in $\M_{\I}$.

Set $\B_r = \B \cup \{ J_r \}$. Then $\Sigma(\B_r)$ is $\Sigma(\B)$ with an extra triangle added on, so $A(\B_r) \leq A(\B) - \delta$. 
Now, $\C'$ and $\C_r$ contain $\B_r$. So, by induction, $\C_r$ is $\B_r$-equivalent to $\C'$ within $\M_{\I}$. 
Connecting the chains $\C_1 \to \cdots \to \C' \to \cdots \to \C_2$, we see that $\C_1$ and $\C_2$ are $\B$-equivalent within $\M_{\I}$.

\textbf{Case 2:}  $T_1$ is white and $T_2$ is black:
Then we can write $(J_1, J_2, K, L)$ as $(Hac, Hbd, Hab, Had)$. Since $(J_1, K, L)$ and $(J_2, K, L)$ are oriented the same way, the triples $(c,b,d)$ and $(a,d,b)$ are cyclically oriented the same way, which shows that $(a,b,c,d)$ are cyclicly oriented.
By Lemma~\ref{lem:cross},  $Hab$, $Hac$, $Had$, $Hbc$, $Hbd$ and $Hcd$ are weakly separated from $\B$.
Set $\B_1 = \B \cup  \{ Hac, Hab, Had, Hbc, Hcd\}$ and $\B_2 = \B \cup  \{ Hbd, Hab, Had, Hbc, Hcd\}$, so the $\B_r$ are weakly separated. 
Moreover, in $\Sigma(\B_r)$, the new vertices that we have added lie immediately adjacent to the edge $(Hab, Had)$ of $\Sigma(\B)$, inside the hole of $\Sigma(\B)$, and hence lie inside $\pi(\I)$. So, by Lemma~\ref{topsame}, these new vertices lie in $\M_{\I}$, so $\B_1$ and $\B_2$ are weakly separated collections in $\M_{\I}$.

Complete $\B_1$ to a maximal weakly separated collection $\C'_1$ within $\M_{\I}$; define $\C'_2$ to be the mutation of $\C'_1$ where we replace $Hac$ by $Hbd$. 
Then $\C_1 \cap \C'_1 \supseteq \B \cup \{ Hac \}$ and $\C_2 \cap \C'_2 \supseteq \B \cup \{ Hbd \}$. 
The complexes $\Sigma( \B \cup \{ Hac \})$ and $\Sigma(\B \cup \{ Hbd \})$ are $\Sigma(\B)$ with one added triangle.
So, by induction, $\C_1$ and $\C'_1$ are $( \B \cup \{ Hac \})$-equivalent within $\M_{\I}$, and $\C_2$ and $\C'_2$ are $( \B \cup \{ Hbd \})$-equivalent within $\M_{\I}$.
Chaining together the mutations $\C_1 \to \cdots \to \C'_1 \to \C'_2 \to \cdots \to \C_2$, we see that $\C_1$ and $\C_2$ are $\B$-equivalent.

\end{proof}

\section{Implications of the main result}

In this section, we go over the direct implications of Theorem~\ref{thm:main}. Each maximal weakly separated collection corresponds to a reduced plabic graph and the mutation of maximal weakly separated collections corresponds to \newword{square moves} of plabic graphs \cite{OPS}. 

We can define a \newword{plabic complex} of a positroid $\M$. Consider a simplicial complex where the vertices are labeled with Pl\"ucker coordinates and the facets are given by plabic graphs (maximal weakly separated collections) of $\M$. A special case of this complex, when $\M$ is the uniform matroid ${[n] \choose k}$, was studied in \cite{HH}. Hess and Hirsch also conjectured that the complex is a pseudomanifold with boundary.

A simplicial complex is a \newword{pseudomanifold with boundary} if it satisfies the following properties \cite{S}:
\begin{itemize}
\item (pure) The facets have the same dimension.
\item (non-branching) Each codimension 1 face is a face of one or two facets.
\item (strongly connected) Any two facets can be joined by a chain of facets in which each pair of neighboring facets have a common codimension 1 face.
\end{itemize}

Therefore, another way to interpret Theorem~\ref{thm:main} is:
\begin{corollary}
Let $\M$ be a positroid. The plabic complex of $\M$ is a pseudomanifold with a boundary.
\end{corollary}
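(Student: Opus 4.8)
The plan is to verify the three defining properties of a pseudomanifold with boundary directly from the results already established. The vertices of the plabic complex are the Pl\"ucker coordinates appearing in some maximal weakly separated collection inside $\M$, and the facets are exactly the maximal weakly separated collections inside $\M_{\I}$. First I would establish \textbf{purity}. By the result of \cite{OPS} (recalled after Proposition~\ref{mutation}, and reproved in the positroid generality here), every maximal weakly separated collection in $\M_{\I}$ has the same cardinality; equivalently, by Proposition~\ref{topsame}, each such $\C$ has $\Sigma(\C)$ filling the entire interior of $\pi(\I)$, and a full triangulation of a fixed region uses a fixed number of triangles and hence a fixed number of vertices. So all facets have the same dimension.

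Next I would establish \textbf{strong connectivity}, which is almost immediate from the main theorem. Two facets $\C$ and $\C'$ sharing no constraint beyond membership in $\M_{\I}$ are, by Theorem~\ref{thm:main} applied with $\B = \I$ (or $\B=\emptyset$ in the Grassmannian case), linked by a chain of mutations $\C = \C_1 \to \cdots \to \C_r = \C'$. Each single mutation replaces one Pl\"ucker label $S\cup\{a,c\}$ by $S\cup\{b,d\}$, leaving all other labels fixed, so consecutive $\C_i$ and $\C_{i+1}$ differ in exactly one vertex and therefore share a common codimension~$1$ face. This is precisely the strongly connected condition.

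The remaining and most delicate property is \textbf{non-branching}: every codimension~$1$ face lies in at most two facets. A codimension~$1$ face is a weakly separated collection $\B$ inside $\M_{\I}$ with $|\B|$ one less than a facet, so $\Sigma(\B)$ fills all of the interior of $\pi(\I)$ except for a single hole, and this hole is bounded by either a quadrilateral (a white square and a black square meeting, or two triangles of the same color forming a rhombus) or possibly a larger polygon. The key point is to show the hole can be a single ``mutable square,'' in which case there are exactly two ways to fill it (the two diagonals, giving the two mutation-related facets), and to rule out holes that admit more than two completions or exactly one. I expect this to be the main obstacle: one must argue, using Proposition~\ref{topsame} together with the local triangulation analysis from Section~\ref{sec:tilings} and Lemma~\ref{lem:cross}, that a single missing cell of $\Sigma(\B)$ forces a quadrilateral hole fillable in exactly two ways (two facets), while a boundary face of the disc corresponds to a hole whose completion is forced (one facet). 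I would carry this out by examining the boundary walk of the hole, showing that any such hole has area exactly $2\delta$ in the same sense used in the proof of Theorem~\ref{thm:main}, and then enumerating the filling triangulations.

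Finally, having verified the three itemized conditions, the corollary follows immediately from the definition of a pseudomanifold with boundary quoted from \cite{S}.
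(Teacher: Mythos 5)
Your reduction to the three axioms, and your treatment of two of them, match what the paper intends: strong connectivity is exactly Theorem~\ref{thm:main} applied with $\B=\I$ (a mutation changes exactly one label, so by purity consecutive facets in the chain share a codimension-one face), and purity can be quoted from \cite{OPS} --- or, more in the spirit of this paper, deduced from Theorem~\ref{thm:main} itself, since mutations preserve cardinality and the theorem links any two facets by mutations. (Your backup argument for purity, that ``a full triangulation of a fixed region uses a fixed number of triangles and hence a fixed number of vertices,'' is not valid: triangulations by triangles of varying shapes need not have a constant number of cells; drop it in favor of the citation or the mutation argument.) The genuine gap is non-branching, which you explicitly defer (``I expect this to be the main obstacle'') and never establish: nothing in your sketch excludes a corank-one collection lying in three or more facets, and your proposed route --- classifying the holes of $\Sigma(\B)$ from scratch and showing each has ``area exactly $2\delta$'' --- contains a false step, since the relevant hole is a parallelogram whose area is unrelated to the global minimum $\delta$. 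Since non-branching is precisely the nontrivial part of the Hess--Hirsch conjecture \cite{HH} that this corollary settles, it cannot be left as a plan.

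The missing idea is that Theorem~\ref{thm:main} should be applied a second time, with $\B$ equal to the codimension-one face itself. First, if $\I \not\subseteq \B$, then $\B$ lies in exactly one facet: every facet contains $\I$, so any facet containing $\B$ contains $\B \cup \I$, which is already a facet. So assume $\I \subseteq \B$, and let $\B \cup \{X\}$ and $\B \cup \{Y\}$ be two distinct facets containing $\B$. The theorem gives a chain of mutations between them in which every collection contains $\B$; every collection in the chain is therefore $\B$ plus a single set, so each mutation in the chain must exchange that single set, i.e., it is a square move $Sac \leftrightarrow Sbd$ (notation of Proposition~\ref{mutation}) whose four sides $Sab$, $Sbc$, $Scd$, $Sda$ all lie in $\B$. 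Thus as soon as $\B$ lies in two facets, such a square exists, and the hole of $\Sigma(\B)$ is the parallelogram with corners $\pi(Sab)$, $\pi(Sbc)$, $\pi(Scd)$, $\pi(Sda)$, namely the star of $Sac$ in $\Sigma(\B\cup\{Sac\})$. Now let $\B\cup\{Z\}$ be any facet containing $\B$. By Proposition~\ref{topsame}, $\Sigma(\B\cup\{Z\})$ fills the interior of $\pi(\I)$ injectively, and every two-cell not containing $Z$ is a cell of $\Sigma(\B)$; hence the two-cells containing $Z$ tile the parallelogram, and injectivity forces each corner to be a vertex of some two-cell containing $Z$. A two-cell containing both $Z$ and a corner $C$ means $Z$ and $C$ lie in a common clique, so $|Z \cap C| = k-1$ for each of the four corners $C$. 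From $|Z\cap Sab|=|Z\cap Scd|=k-1$ and $Sab \cap Scd = S$ one gets $S \subset Z$, so $Z = S\cup\{x,y\}$; then $|\{x,y\}\cap\{a,b\}|=|\{x,y\}\cap\{b,c\}|=|\{x,y\}\cap\{c,d\}|=|\{x,y\}\cap\{d,a\}|=1$ forces $\{x,y\}=\{a,c\}$ or $\{b,d\}$. Hence at most two facets contain $\B$, which is non-branching. Some argument of this kind --- using the theorem to produce the square over $\B$, then the tiling machinery to pin down the fillings --- is what your proposal is missing.
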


A similar phenomenon for wiring diagrams and maximal strongly separated collections will be shown in \cite{O2}.



%

\section{Acknowledgments}
The first author would like to thank Sergey Fomin for useful discussions.


\end{document}